\newcommand{\media}{\m\kern12mu\hbox{\vrule height9pt depth-3.2pt width5pt} \m\kern-20mu\int}
\newcommand{\m }{\mu }
\newtheorem{thm}{Theorem}[section]
\newtheorem{lem}[thm]{Lemma}
\newtheorem{prop}[thm]{Proposition}
\newtheorem{rmk}[thm]{Remark}
\newtheorem{theorem}{Theorem}[section]
\newtheorem{lemma}[theorem]{Lemma}
\begin{document}
\title{On the third cohomology of the Lie algebra of vector fields on weighted densities on $\mathbb{R}$ }

{\author{Wajd Afsi \thanks{D\'epartement de Math\'ematiques, Facult\'e des Sciences, Universit\'e de Sfax, Route de
Soukra, km 3.5, BP 1171, 3000 Sfax, Tunisie.~~~~E.mail: ~~wajd.afsi7@gmail.com
}\and
Salem Omri \thanks{D\'epartement de Math\'ematiques, Facult\'e des Sciences de
Gafsa, Zarroug 2112, Tunisie.~~E.mail: omrisalem991@gmail.com
}}}\maketitle

\begin{abstract}
Let Vect($\mathbb{R}$) be the Lie algebra of smooth vector fields on $\mathbb{R}$ and $\mathbb{F}_{\lambda}$ be the space of $\lambda$-densities on $\mathbb{R}$.
 Vect($\mathbb{R}$) acts on $\mathbb{F}_{\lambda}$ by Lie derivative. In this paper, we compute the third differential cohomology of
 the Lie algebra Vect($\mathbb{R}$) with coeffcients in the space $\mathbb{F}_{\lambda}.$ Explicit cocycles spanning these cohomology spaces are given.

\end{abstract}

\maketitle {\bf Mathematics Subject Classification} (2020). 17B56, 53D55, 58H15.

$\mathbf{Key words}$: Cohomology, Lie algebra, Representation of Lie algebra, Weighted densities.

\numberwithin{equation}{section}
%%%%%%%%%%%%%%%%%%%%%%%%%%%%%%%%%%%%%%%%%%%%%%
\section{Introduction}
%%%%%%%%%%%%%%%%%%%%%%%%%%%%%%%%%%%%%%%%%%%%%%%
Let $\mathrm{Vect}(\mathbb{R})$ be the Lie algebra of all vector fields $g \frac{d}{dx}$ on $\mathbb{R},$
 $g \in \mathcal{C}^\infty(\mathbb{R})$. For any $\lambda \in \mathbb{R}$ we define a structure of $\mathrm{Vect}(\mathbb{R})$-module over $\mathcal{C}^\infty(\mathbb{R})$ by \\
\begin{equation*}\mathbb{L}_{g\frac{d}{dx}}^\lambda(f)= gf'+\lambda g'f,\end{equation*}
where $g,~f \in \mathcal{C}^\infty(\mathbb{R})$ and $g' :=\frac{dg}{dx}$. Denote by $\mathbb{F}_{\lambda}$ the $\mathrm{Vect}(\mathbb{R})$-module structure on
$\mathcal{C}^\infty(\mathbb{R})$ defined by $\mathbb{L}^\lambda$ for a fixed $\lambda$. Geometrically, $\mathbb{F}_{\lambda} = \{fdx^\lambda|~f \in\mathcal{C}^\infty(\mathbb{R})\}$
is the space of weighted densities of weight $\lambda \in \mathbb{R}$.
The space $\mathbb{F}_{\lambda} $ coincides with the space of vector fields, functions and differential $1$-forms for $\lambda = -1,~ 0$ and $1,$ respectively. Obviously the adjoint
$\mathrm{Vect}(\mathbb{R})$-module is isomorphic to $\mathbb{F}_{-1}$. The Lie algebra $\mathrm{Vect}(\mathbb{R})$ has a Lie subalgebra $\mathfrak{aff}(1)$=Span($\frac{d}{dx},x\frac{d}{dx}$).
\vskip0.2cm

In \cite{ksw}, the authors found the first cohomology of $\frak{aff}(1)$ acting on $n$-aray linear differential operators $\mathrm{H}^1_{ \mathrm{diff}}(\mathfrak{aff}(1),\mathcal{D}_{\lambda_1,\cdots,\lambda_n;\mu})$. In \cite{GF,Fu, RO, OT}, the authors found the cohomology of the Lie algebra of vector fields on the circle,
and the analogue super structures:
$\mathrm{H}^1_{\mathrm{diff}}(\mathcal{K}(n),\mathfrak{F}^n_{\lambda})$, where $\mathcal{K}(n)$
is realized as the Lie superalgebra of ${\mathrm Vect}(\mathbb{R})$  \cite{is}.

 In \cite{bou,s}, the author found Cohomology of the vector fields Lie algebras on $\mathbb{R}\mathbb{P}^1$ acting on bilinear differential operators
  and the $\frak{sl}(2)$-relative cohomology of the Lie algebra of  vector fields and differential operators.

Our purpose in this paper is to compute the third differential cohomology from the Lie algebra of smooth vectors fields ${\rm Vect}(\mathbb{R})$ with coefficients
in the space of tensor densities $\mathbb{F}_{\lambda},$ where only cochains given by differential operators are considered.

%%%%%%%%%%%%%%%%%%%%%%%%%%%%%%%%%%%%%%%%%%%%%%
\section{Relative cohomology}
%%%%%%%%%%%%%%%%%%%%%%%%%%%%%%%%%%%%%%%%%%%%%%%
Let us first recall some fundamental concepts from cohomology theory. Let $\mathfrak{g}$ be a Lie algebra acting on a vector space $\mathrm{V}$ and let $\mathfrak{h}$ be a subalgebra
of $\mathfrak{g}$ \cite{Fu,hm}. (If $\mathfrak{h}$ is omitted it is assumed to be $\{0\}$). The space of $\mathfrak{g}$-relative $n$-cochains of $\mathfrak{g}$ with values in $\mathrm{V}$ is the
$\mathfrak{g}$-module
$$C^{n}(\mathfrak{g}, \mathfrak{h}; \mathrm{V})= \mathrm{Hom}_{\mathfrak{h}}(\Lambda^{n}(\mathfrak{g}/\mathfrak{h}); \mathrm{V}).$$
The coboundary operator $\delta_{n} : C^{n}(\mathfrak{g}, \mathfrak{h}; \mathrm{V}) \rightarrow C^{n+1}(\mathfrak{g}, \mathfrak{h}; \mathrm{V})$ is a $\mathfrak{g}$-map satisfying
$\delta_{n} \circ \delta_{n-1} =0$. The kernel of $\delta_{n}$, denoted $\mathrm{Z}^{n}(\mathfrak{g}, \mathfrak{h}; \mathrm{V})$ is the space of $\mathfrak{h}$-relative $n$-cocycles, among them,
the elements in the range of $\delta_{n-1}$ are called $\mathfrak{h}$-relative $n$-coboundaries. We denote by $\mathrm{B}^{n}(\mathfrak{g}, \mathfrak{h}; \mathrm{V})$ the space of $n$-coboundaries.
By definition, the $n^{th}$ $\mathfrak{h}$-relative cohomolgy space is the quotient space
$$\mathrm{H}^{n}(\mathfrak{g}, \mathfrak{h}; \mathrm{V}) = \mathrm{Z}^{n}(\mathfrak{g}, \mathfrak{h}; \mathrm{V}) / \mathrm{B}^{n}(\mathfrak{g}, \mathfrak{h}; \mathrm{V}).$$
We will only need the formula of $\delta_{n}$ (which will be simply denoted $\delta$) in degrees "$0$; $1$;" $2$ and $3$:
\begin{itemize}
\item For $v \in C^{0}(\mathfrak{g}, \mathfrak{h}; \mathrm{V}) = V^{\mathfrak{h}}$, $\delta v (g) :=g\cdot v,$ where $$V^{\mathfrak{h}}=\{ v\in V ~| ~h \cdot v=0~~ \hbox{for all}~ h\in \mathfrak{h}\}.$$
\item For $\Upsilon \in C^{1}(\mathfrak{g}, \mathfrak{h}; \mathrm{V}),$ $$\delta(\Upsilon)(g, h) := g\cdot \Upsilon(h) -h\cdot \Upsilon(g)- \Upsilon([g, h]) ~~\hbox{for any}~~ g,h \in \mathfrak{g}.$$
\item For $\Omega \in C^{2}(\mathfrak{g}, \mathfrak{h}; \mathrm{V})$, $$\delta(\Omega)(x,y,z) := x.\Omega(y,z) -y.\Omega(x,z) +z.\Omega(x,y) -\Omega([x,y],z)+\Omega([x,z],y)-\Omega([y,z],x)$$ for any $x, y, z \in \mathfrak{g}.$
\item For $\Phi \in C^{3}(\mathfrak{g}, \mathfrak{h}; \mathrm{V})$, $$\delta(\Phi)(x,y,z,t) := x.\Phi(y,z,t) -y.\Phi(x,z,t) +z.\Phi(x,y,t) -t.\Phi(x,y,z) -\Phi([x,y],z,t)+\Phi([x,z],y,t)$$
$$-\Phi([x,t],y,z)-\Phi([y,z],x,t)+\Phi([y,t],x,z)-\Phi([z,t],x,y)~~\hbox{for any}~~ x, y, z, t \in \mathfrak{g}.$$
\end{itemize}

%%%%%%%%%%%%%%%%%%%%%%%%%%%%%%%%%%%%%%%%%%%%%%
\section{The space $\mathrm{H}^3_{\mathrm{diff}}\big({\rm Vect}(\mathbb{R}),\mathfrak{aff}(1) ; \mathbb{F}_{\lambda}\big)$}
%%%%%%%%%%%%%%%%%%%%%%%%%%%%%%%%%%%%%%%%%%%%%%%
In this section, we consider the Lie algebra $\mathrm{Vect}(\mathbb{R})$ acting on $\mathbb{F}_{\lambda}$ and we compute the third $\mathfrak{aff}(1)$-relative cohomology space of
$\mathrm{Vect}(\mathbb{R})$ with coefficients in $\mathbb{F}_{\lambda}$. First, we classify $\mathfrak{aff}(1)$-invariant differential operators, then we isolate among them those
that are $3$-cocycles. To do that, we need the following Lemma.
 \begin{lem}\label{inva}
 Any $3$-cocycle $C\in \hbox{Z}^{3}\big(\mathrm{Vect}(\mathbb{R}) ; \mathbb{F}_{\lambda}\big)$ vanishing on the Lie algebra $\mathfrak{aff}(1)$ of  $\mathrm{Vect}(\mathbb{R})$ is $\mathfrak{aff}(1)$-invariant.
\end{lem}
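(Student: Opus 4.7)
The plan is to evaluate the cocycle identity $\delta C = 0$ at a 4-tuple of the form $(X, a, b, c)$ with $X \in \mathfrak{aff}(1)$ and $a, b, c \in \mathrm{Vect}(\mathbb{R})$ arbitrary, and then read off the $\mathfrak{aff}(1)$-invariance of $C$ from the terms that survive the vanishing hypothesis.

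First, I would recall what has to be shown: a 3-cochain $C$ is $\mathfrak{aff}(1)$-invariant precisely when, for every $X \in \mathfrak{aff}(1)$ and $a, b, c \in \mathrm{Vect}(\mathbb{R})$,
\[ X \cdot C(a,b,c) - C([X,a],b,c) - C(a,[X,b],c) - C(a,b,[X,c]) = 0. \]
So the goal is to recover exactly this expression from $\delta C(X,a,b,c)$.

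Next, I would expand $\delta(C)(X,a,b,c)$ using the degree-$3$ coboundary formula from Section~2. This produces ten terms. Six of them vanish directly from the hypothesis that $C$ annihilates any triple with an entry in $\mathfrak{aff}(1)$: the three ``action'' terms $a \cdot C(X,b,c)$, $b \cdot C(X,a,c)$, $c \cdot C(X,a,b)$, where $X$ occupies a slot of $C$; and the three ``bracket'' terms $C([a,b],X,c)$, $C([a,c],X,b)$, $C([b,c],X,a)$, where again $X$ occupies a slot. Note that $[X, a]$, $[X,b]$, $[X,c]$ need not lie in $\mathfrak{aff}(1)$, so the corresponding terms do not vanish, and they are precisely what is needed.

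Finally, I would invoke the antisymmetry of $C$ to normalize the three surviving terms carrying $[X, \cdot]$: from $C([X,b],a,c) = -C(a,[X,b],c)$ and $C([X,c],a,b) = C(a,b,[X,c])$, what is left of $\delta(C)(X,a,b,c)$ is exactly
\[ X \cdot C(a,b,c) - C([X,a],b,c) - C(a,[X,b],c) - C(a,b,[X,c]), \]
and setting this to zero yields the desired invariance. The argument is essentially bookkeeping; the only real care is in tracking the signs of the coboundary formula and of the antisymmetry rearrangements. No special feature of $\mathfrak{aff}(1)$ is used beyond the fact that it is a Lie subalgebra of $\mathrm{Vect}(\mathbb{R})$, so in principle the same proof works for any subalgebra.
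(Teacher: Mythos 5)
Your proof is correct and takes essentially the same approach as the paper: both evaluate the $3$-cocycle identity $\delta C=0$ with one argument $X\in\mathfrak{aff}(1)$, use the hypothesis that $C$ vanishes whenever a slot contains an element of $\mathfrak{aff}(1)$ to discard six of the ten terms, and identify the surviving terms (after antisymmetry rearrangements) as exactly the $\mathfrak{aff}(1)$-invariance condition. Your version is in fact more careful about the sign bookkeeping than the paper's, which states the reduced identity directly.
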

\begin{proof}.

The $3$-cocycle condition reads as follows:
\begin{align*}
(-1)^{C_{\{X,Y,Z,T\}}}C([X, Y], Z, T) -(-1)^{\widetilde{C}_{\{X,Y,Z,T\}}} \mathbb{L}_{X}^{\lambda}(C)(Y, Z, T) + \circlearrowleft (X, Y, Z, T) =\\ C([X, Z], Y, T)+C([Y, T], X, Z)
\end{align*}
for every $X, Y, Z , T \in{\rm Vect}(\mathbb{R}),~\mathrm{where}~\circlearrowleft (X, Y, Z, T)$ denotes the summands obtained from the two written ones by the cyclic permutation of the
symbols $X, Y, Z, T;$
\begin{equation*}
(-1)^{C_{\{X,Y,Z,T\}}}C([X_1, X_2], X_3, X_4)=\left \{
\begin{array}{ll}
(-1)^{i}C([X_1, X_2], X_3, X_4) & {\rm if } ~\exists~i~/~X_i=Y, \\[3mm]~
0& {\rm otherwise; }
\end{array}\right.
\end{equation*}
and
\begin{equation*}
(-1)^{\widetilde{C}_{\{X,Y,Z,T\}}} \mathbb{L}_{X_1}^{\lambda}(C)(X_2, X_3, X_4)=\left \{
\begin{array}{ll}
(-1)^{i} \mathbb{L}_{X_1}^{\lambda}(C)(X_2, X_3, X_4) & {\rm if } ~\exists~i~/~X_i=Y, \\[3mm]~
0& {\rm otherwise. }
\end{array}\right.
\end{equation*}
  Now, if $X \in \mathfrak{aff}(1),$
  then the equation above becomes
 \begin{equation*}
C([X, Y], Z, T) - C([X, Z], Y, T)  + C([X, T], Y, Z)  = \mathbb{L}_{X}^{\lambda}(C)(Y, Z, T).
\end{equation*}
This condition is nothing but the invariance property.
\end{proof}

%%%%%%%%%%%%%%%%%%%%%%%%%%%%%%%%%%%%%%%%%%%%%%%%%%%%%%%%%%%%%%%%%%%%%%%%%%
\subsection{$\mathfrak{aff}(1)$-invariant differential operators}\label{invariant}
%%%%%%%%%%%%%%%%%%%%%%%%%%%%%%%%%%%%%%%%%%%%%%%%%%%%%%%%%%%%%%%%%%%%%%%%%%
As our $3$-cocycles vanish on $\mathfrak{aff}(1)$, we will investigate $\mathfrak{aff}(1)$-invariant skew-symmetric trilinear differential operators that vanish on $\mathfrak{aff}(1).$
\begin{prop}\label{proposition1}
Any skew-symmetric trilinear differential operators $C_{\lambda} :  \wedge^3 {\rm Vect}(\mathbb{R}) \rightarrow \mathbb{F}_{\lambda},$
which are $\mathfrak{aff}(1)$-invariant and vanish on $\mathfrak{aff}(1),$ is as follows :
\begin{equation*}
 C_{n}(X,Y,Z) = \displaystyle \sum_{\substack{i=2 \\ j>i \\ i+j+l=n }}^{[\frac{n-3}{3}]} c_{i,j,n-i-j}
\begin{vmatrix}
f^{i} & g^{i} & h^{i} \\
f^{j} & g^{j} & h^{j} \\
f^{n-i-j} & g^{n-i-j} & h^{n-i-j}
\end{vmatrix} dx^{n-3}
\end{equation*}
for $X= f\frac{d}{ds}$, $Y =g\frac{d}{ds}$, $Z =h\frac{d}{dx}$, $c_{i,j,n-i-j} \in \mathbb{R}$ and $n\in \mathbb{N}+ 9.$ $[k]$ denotes the integer part of $k$:
\end{prop}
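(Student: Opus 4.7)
The plan is to exhaust the constraints one by one on a general ansatz, in the order: differential-operator form, translation invariance, scaling (Euler) invariance, skew-symmetry, and finally vanishing on $\mathfrak{aff}(1)$. Starting from the local expression
\[
C\bigl(f\tfrac{d}{dx},g\tfrac{d}{dx},h\tfrac{d}{dx}\bigr) \;=\; \biggl(\sum_{\alpha\in\mathbb{N}^{3}} c_{\alpha}(x)\,f^{(\alpha_{1})}g^{(\alpha_{2})}h^{(\alpha_{3})}\biggr)\,dx^{\lambda}
\]
with a finite sum, commutation with $\mathbb{L}^{\lambda}_{\frac{d}{dx}}$ (noting $[\frac{d}{dx},f\frac{d}{dx}]=f'\frac{d}{dx}$ and $\mathbb{L}^{\lambda}_{\frac{d}{dx}}(u\,dx^{\lambda})=u'\,dx^{\lambda}$) immediately forces each $c_{\alpha}$ to be a constant in $x$.

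Next, I would impose invariance under the Euler field $x\tfrac{d}{dx}$. Using the Leibniz identity $(xf'-f)^{(k)}=xf^{(k+1)}+(k-1)f^{(k)}$ (and its analogs in $g,h$) together with $\mathbb{L}^{\lambda}_{x\frac{d}{dx}}(u\,dx^{\lambda})=(xu'+\lambda u)\,dx^{\lambda}$, the $x$-proportional parts on the two sides cancel (both reassemble into $xu'$), and what survives is the pure scalar relation
\[
\sum_{\alpha} c_{\alpha}\bigl(\alpha_{1}+\alpha_{2}+\alpha_{3}-3-\lambda\bigr)\,f^{(\alpha_{1})}g^{(\alpha_{2})}h^{(\alpha_{3})}\;=\;0.
\]
Thus every surviving multi-index satisfies $|\alpha|=\lambda+3$; setting $n:=\lambda+3$ gives $dx^{\lambda}=dx^{n-3}$ as in the statement.

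Skew-symmetry in $(f,g,h)$ forces $c_{\alpha_{\sigma(1)},\alpha_{\sigma(2)},\alpha_{\sigma(3)}}=\mathrm{sgn}(\sigma)\,c_{\alpha}$ for every $\sigma\in\mathfrak{S}_{3}$; indices with any repetition drop out, and each ordered triple $i<j<l$ with $i+j+l=n$ contributes a single free real coefficient times the $3\times 3$ determinant written in the proposition. Finally, to impose the vanishing on $\mathfrak{aff}(1)$, I would plug in $f\equiv 1$: every $f^{(k)}$ with $k\geq 1$ is zero, so surviving determinants must have smallest index $i=0$, hence $c_{0,j,l}=0$. Plugging $f=x$ next, one has $f^{(0)}=x$, $f^{(1)}=1$, $f^{(k)}=0$ for $k\geq 2$; since $i\geq 1$ is already in force, only $i=1$ yields a nonzero $f$-column, forcing $c_{1,j,l}=0$. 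Hence $i\geq 2$. Combined with $i<j<l$ and $i+j+l=n$, this requires $n\geq 2+3+4=9$ and yields $3i+3\leq n$, i.e.\ $i\leq [\tfrac{n-3}{3}]$, matching the stated index range.

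The main obstacle is the bookkeeping in the Euler-invariance step: one must verify that the $x$-proportional pieces cancel exactly between the two sides so that what remains is the clean scalar constraint $|\alpha|=\lambda+3$. All remaining steps are purely combinatorial: skew-symmetry packages the permutation sum into the claimed determinant, and the two vanishing conditions $C(\frac{d}{dx},\cdot,\cdot)=0$ and $C(x\frac{d}{dx},\cdot,\cdot)=0$ shift the minimum derivative order in each column from $0$ to $2$.
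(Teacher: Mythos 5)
Your proof is correct and follows essentially the same route as the paper: invariance under the two generators $\frac{d}{dx}$ and $x\frac{d}{dx}$ of $\mathfrak{aff}(1)$ forces the coefficients to be constants and imposes the homogeneity condition $i+j+l=\lambda+3$. In fact your write-up is more complete than the paper's own proof, which stops at that point: you additionally carry out the vanishing-on-$\mathfrak{aff}(1)$ step (plugging in $f\equiv 1$ and $f=x$ to force $i\geq 2$), the packaging of skew-symmetry into the $3\times 3$ determinants, and the derivation of the index bounds $n\geq 9$ and $i\leq\left[\frac{n-3}{3}\right]$, all of which the paper leaves implicit.
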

\begin{proof}
The generic form of any such a differential operator is \big(here $X= f\frac{d}{ds}$, $Y =g\frac{d}{ds}$, $Z =h\frac{d}{dx} \in \mathrm{Vect}(\mathbb{R})$\big) :
\begin{equation*}
C_{\lambda}(X,Y,Z) = \sum_{i+j+l\leq k} c_{i,j,l}f^{i} g^{j} h^{l} dx^{\lambda}
\end{equation*}
where $c_{i,j,l}=c_{j,l,i}=c_{l,i,j}=-c_{j,i,l}=-c_{i,l,j}=-c_{l,j,i}\in \mathcal{C}^{\infty}(\mathbb{R})$, and $f^{i}$ stands for $\frac{d^{i}f}{dx^{i}}.$
The invariance property with respect to the vector field $X = \frac{d}{dx}$ with arbitrary $Y$ and $Z$ implies that $c'_{i,j,l}=0.$ Therefore $c_{i,j,l}$ are constants. Now, the invariance property with respect to the vector field $X = x \frac{d}{dx}$ with arbitrary $Y$ and $Z$ implies that $i+j+l = \lambda+3,$ so in particular $\lambda$ is integer.
\end{proof}
%%%%%%%%%%%%%%%%%%%%%%%%%%%%%%%%%%%%%%%%%%%%%%%%%%%%%%%%%%%%%%%%%%%%%%%%%%
\subsection{$\mathfrak{aff}(1)$-relative cohomology of $\mathrm{Vect}(\mathbb{R})$}
%%%%%%%%%%%%%%%%%%%%%%%%%%%%%%%%%%%%%%%%%%%%%%%%%%%%%%%%%%%%%%%%%%%%%%%%%%
The main result of this section is the following
\begin{thm} \label{main}
We have
\begin{equation*}
\mathrm{H}^3_{\mathrm{diff}}\big({\rm Vect}(\mathbb{R}),\mathfrak{aff}(1) ; \mathbb{F}_{\lambda}\big) = \left \{
\begin{array}{ll}
\mathbb{R} & {\rm if } ~\lambda=9,11 \\[3mm]~
0& {\rm otherwise. }
\end{array}
\right.
\end{equation*}
The corresponding spaces $\mathrm{H}^3_{\mathrm{diff}}\big({\rm Vect}(\mathbb{R}),\mathfrak{aff}(1) ; \mathbb{F}_{\lambda}\big)$ are spanned by the cohomology classes of the following non-trivial $3$-cocycles :
\begin{equation*}
\Omega_{12}(X, Y, Z) =  \begin{vmatrix}
f^{3} & g^{3} & h^{3} \\
f^{4} & g^{4} & h^{4} \\
f^{5} & g^{5} & h^{5}
\end{vmatrix} dx^{9}
\end{equation*}

\begin{equation*}
\Omega_{14}(X, Y, Z) = \left ( 5 \begin{vmatrix}
f^{3} & g^{3} & h^{3} \\
f^{4} & g^{4} & h^{4} \\
f^{7} & g^{7} & h^{7}
\end{vmatrix}
-14 \begin{vmatrix}
f^{3} & g^{3} & h^{3} \\
f^{5} & g^{5} & h^{5} \\
f^{6} & g^{6} & h^{6}
\end{vmatrix}\right)dx^{11}
\end{equation*}
for $X= f\frac{d}{dx}$, $Y =g\frac{d}{dx}$, $Z =h\frac{d}{dx}$.
\end{thm}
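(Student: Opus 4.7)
By Lemma~\ref{inva} and Proposition~\ref{proposition1}, any class in $\mathrm{H}^{3}_{\mathrm{diff}}(\mathrm{Vect}(\mathbb{R}), \mathfrak{aff}(1); \mathbb{F}_{\lambda})$ has an $\mathfrak{aff}(1)$-invariant representative, and such invariant trilinear operators form a finite-dimensional space $V_{n}$ which is nonzero only when $\lambda = n - 3$ is an integer $\geq 6$; a basis consists of the Wronskian-type determinants from Proposition~\ref{proposition1}, which I denote $\Delta_{i,j,l}$ and index by triples $2 \leq i < j < l$ with $i+j+l = n$. The dimensions of $V_{n}$ for $\lambda = 6, 7, 8, 9, 10, 11$ are $1, 1, 2, 3, 4, 5$. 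My plan is to compute, inside $V_{n}$, the subspace $Z_{n}$ of $3$-cocycles and the subspace $B_{n}$ of $3$-coboundaries coming from $\mathfrak{aff}(1)$-invariant 2-cochains, and then take the quotient.

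\textbf{Cocycle condition.} Writing $C = \sum c_{i,j,l} \Delta_{i,j,l}$ and evaluating $\delta C(X,Y,Z,T)$ on four generic vector fields $X = f\frac{d}{dx}$, $Y = g\frac{d}{dx}$, $Z = h\frac{d}{dx}$, $T = k\frac{d}{dx}$, using the bracket $[f\frac{d}{dx}, g\frac{d}{dx}] = (fg'-f'g)\frac{d}{dx}$ and the Lie derivative formula $\mathbb{L}^{\lambda}_{f\frac{d}{dx}}(\phi\, dx^{\lambda}) = (f\phi' + \lambda f'\phi)\, dx^{\lambda}$, every term of $\delta C$ becomes a polynomial in the mixed partials $f^{(a)} g^{(b)} h^{(c)} k^{(d)}$ of total order $n+1$. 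Setting the coefficient of each monomial to zero yields a linear system in the $c_{i,j,l}$ whose solution space is $Z_{n}$. I expect this to be the main obstacle: tracking, for each $\Delta_{i,j,l}$ and each of the ten terms in the cocycle formula, the way the Leibniz rule distributes derivatives among $f, g, h, k$ is intricate and must be carried out case by case for each $n$.

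\textbf{Coboundaries and conclusion.} The same invariance reasoning at the 2-cochain level shows that $\mathfrak{aff}(1)$-invariant bilinear operators vanishing on $\mathfrak{aff}(1)$ and valued in $\mathbb{F}_{\lambda}$ are spanned, for $\lambda = m - 2 \in \mathbb{Z}_{\geq 3}$, by the $2 \times 2$ Wronskians $(f^{(i)} g^{(j)} - f^{(j)} g^{(i)})\, dx^{\lambda}$ with $2 \leq i < j$ and $i + j = m$. Applying $\delta$ to each such 2-cochain and re-expressing in the $\Delta_{i,j,l}$ basis produces $B_{n} \subset V_{n}$. A case-by-case comparison is then expected to give $Z_{n} = B_{n}$ for $\lambda \in \{6, 7, 8, 10\}$ and for every $\lambda \geq 12$, while for $\lambda = 9$ and $\lambda = 11$ a one-dimensional quotient $Z_{n}/B_{n}$ survives; direct substitution will match the unique surviving class with the cocycles $\Omega_{12}$ and $\Omega_{14}$ up to scalar, and non-triviality is confirmed by exhibiting a characteristic monomial, e.g.\ $f^{(3)} g^{(4)} h^{(5)}$ for $\Omega_{12}$ and $f^{(3)} g^{(5)} h^{(6)}$ for $\Omega_{14}$, whose coefficient cannot be matched by $\delta$ of any invariant 2-cochain.
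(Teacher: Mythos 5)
Your strategy is the same as the paper's: use Lemma \ref{inva} and Proposition \ref{proposition1} to replace cohomology classes by $\mathfrak{aff}(1)$-invariant operators, describe the invariant coboundaries as in Proposition \ref{cob}, and compare the two inside each finite-dimensional space of invariant operators. The problem is that you have deferred both substantive steps, and one of them cannot be carried out in the way you describe. For $\lambda\ge 12$ there are infinitely many values to treat, so a ``case-by-case comparison'' is not a possible proof of the vanishing statement; one needs a mechanism uniform in $\lambda$. In the paper this role is played by a recursion extracted from the cocycle identity: collecting the coefficient of $f''g^{(\beta)}h^{(\gamma)}k^{(\theta)}$ gives (\ref{condition02}), namely $(\beta+1)(\beta-2)c_{\beta+1,\gamma,\theta}+(\gamma+1)(\gamma-2)c_{\beta,\gamma+1,\theta}+(\theta+1)(\theta-2)c_{\beta,\gamma,\theta+1}=0$, which expresses all $c_{4,\cdot,\cdot}$ through the $c_{3,\cdot,\cdot}$, then all $c_{5,\cdot,\cdot}$ through the $c_{4,\cdot,\cdot}$, and so on (supplemented, for $n\ge 15$, by the analogous system obtained from $\alpha=3$ in (\ref{condition})); the surviving free coefficients are then absorbed into coboundaries. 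Your proposal contains no counterpart of this inductive step, so the ``$0$ otherwise'' half of the theorem is out of reach of your method.

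Second, the one step of your plan that is spelled out concretely --- certifying non-triviality of $\Omega_{12}$ via the monomial $f^{(3)}g^{(4)}h^{(5)}$ --- fails when executed. For $\lambda=9$ write $B=\sum_{i+j=11,\ i,j\ge 2}\gamma_{i,j}f^{(i)}g^{(j)}dx^{9}$ with $\gamma_{i,j}=-\gamma_{j,i}$. The monomial $f^{(3)}g^{(4)}h^{(5)}$ does occur in $\delta B$: the term $B([X,Y],Z)$ contributes $\bigl[\tbinom{6}{3}-\tbinom{6}{2}\bigr]\gamma_{6,5}=-5\gamma_{5,6}$, the term $-B([X,Z],Y)$ contributes $\bigl[\tbinom{7}{3}-\tbinom{7}{2}\bigr]\gamma_{4,7}=14\gamma_{4,7}$, and $B([Y,Z],X)$ contributes $\bigl[\tbinom{8}{4}-\tbinom{8}{3}\bigr]\gamma_{8,3}=-14\gamma_{3,8}$, for a total of $14\gamma_{4,7}-14\gamma_{3,8}-5\gamma_{5,6}$, which is not identically zero. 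So the coefficient of your ``characteristic monomial'' can be matched by a coboundary; (non)triviality is a statement about the full image of $\delta$, not about a single monomial. Worse, adjoining this third component to the two components of $\delta B$ recorded in the paper for $n=9$ (the coefficients $2\gamma_{4,7}-48\gamma_{2,9}+20\gamma_{3,8}$ and $5\gamma_{5,6}-42\gamma_{2,9}+14\gamma_{4,7}$ of the determinants with rows $(2,3,7)$ and $(2,4,6)$, which the same computation reproduces exactly), one obtains a $3\times 4$ matrix of rank $3$: the coboundary map would then be onto the whole $3$-dimensional invariant space, forcing $\Omega_{12}$ itself to be a coboundary. Thus an honest execution of your own recipe at $\lambda=9$ collides with the statement being proved, and is inconsistent with formula (\ref{coboundary1}), which discards every determinant whose first row has order larger than $2$. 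This discrepancy must be resolved by a complete, verified computation of the coboundary space before the finite cases $6\le\lambda\le 11$ --- and hence the theorem --- can be considered established by this approach.
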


\begin{rmk}
The cohomology space $\mathrm{H}^2_{\mathrm{diff}}\big({\rm Vect}(\mathbb{R}),\mathfrak{aff}(1) ; \mathbb{F}_{\lambda}\big)$ has been computed in \cite{SO}
\end{rmk}
To prove Theorem \ref{main} we need first to study properties of the coboundaries.
\begin{prop}\label{cob}
Every coboundary $\delta(B)\in \mathrm{B}^{3}\big(\mathrm{Vect}(\mathbb{R}), \mathfrak{aff}(1); \mathbb{F}_{n}\big)$ retains the following form
\begin{equation}\label{cob1}
\delta(B)(X,Y,Z)=\sum_{i+j+l=n+3}\beta_{i,j,l}f^{i}g^{j}h^{l}dx^{n}
\end{equation}
 where $$\beta_{0,j,n+3-j}=\beta_{1,j,n+2-j}=0$$
and $B : \mathrm{Vect}(\mathbb{R}) \wedge \mathrm{Vect}(\mathbb{R}) \rightarrow \mathbb{F}_{n}$ be an $\mathfrak{aff}(1)$-invariant operator defined by \big(for $X=f\frac{d}{dx}\in \mathrm{Vect}(\mathbb{R})$ and $Y=g\frac{d}{dx}\in \mathrm{Vect}(\mathbb{R})$\big) :
$$B(X,Y) = \sum_{i+j=n+2}\gamma_{i,j} f^{i}g^{j}dx^{n}$$ where $\gamma_{i,j}=-\gamma_{j,i}$ and $\gamma_{0,n+2}=\gamma_{1,n+1}=0.$
\end{prop}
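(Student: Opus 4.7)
The plan is to deduce the vanishings $\beta_{0,j,l}=\beta_{1,j,l}=0$ from two observations: $\delta(B)$ is automatically $\mathfrak{aff}(1)$-invariant (so it fits the template of Proposition \ref{proposition1}), and $\delta(B)$ actually vanishes on any triple one of whose entries lies in $\mathfrak{aff}(1)$. Evaluating at $X=\frac{d}{dx}$ will then kill the $\beta_{0,j,l}$ coefficients, and evaluating at $X=x\frac{d}{dx}$ will kill the $\beta_{1,j,l}$ coefficients.

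First I would verify that $\delta(B)$ is $\mathfrak{aff}(1)$-invariant. This is automatic because the Chevalley--Eilenberg differential is a morphism of $\mathrm{Vect}(\mathbb{R})$-modules, so the coboundary of an $\mathfrak{aff}(1)$-invariant cochain is again $\mathfrak{aff}(1)$-invariant. Repeating the argument of Proposition \ref{proposition1} for such operators (without imposing the vanishing on $\mathfrak{aff}(1)$), invariance under $\frac{d}{dx}$ forces the coefficients to be constant and invariance under $x\frac{d}{dx}$ forces the weight balance $i+j+l=n+3$, giving the expansion $\delta(B)(X,Y,Z)=\sum_{i+j+l=n+3}\beta_{i,j,l}f^{(i)}g^{(j)}h^{(l)}\,dx^n$ with $\beta$ totally skew.

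Next I would show that $\delta(B)(X,Y,Z)=0$ whenever $X\in\mathfrak{aff}(1)$. Since $B$ vanishes on $\mathfrak{aff}(1)$, the three terms $Y\cdot B(X,Z)$, $Z\cdot B(X,Y)$ and $B([Y,Z],X)=-B(X,[Y,Z])$ in the coboundary formula all vanish, leaving
$$\delta(B)(X,Y,Z)=\mathbb{L}_X^{n} B(Y,Z)-B([X,Y],Z)+B([X,Z],Y).$$
This right-hand side is exactly the identity expressing $\mathfrak{aff}(1)$-invariance of $B$ applied to the pair $(Y,Z)$, hence it equals zero.

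Finally, substituting $X=\frac{d}{dx}$ (so $f=1$, $f^{(i)}=0$ for $i\geq 1$) into the general expansion of $\delta(B)$ reduces it to $\sum_{j+l=n+3}\beta_{0,j,l}g^{(j)}h^{(l)}\,dx^n=0$; choosing test fields $Y=g\frac{d}{dx}$ and $Z=h\frac{d}{dx}$ with suitably independent high-order derivatives (for instance, monomials of controlled degrees) together with the skew-symmetry $\beta_{0,j,l}=-\beta_{0,l,j}$ yields $\beta_{0,j,l}=0$ for every admissible pair $(j,l)$. Substituting $X=x\frac{d}{dx}$ and using that $\beta_{0,j,l}=0$ now leaves $\sum_{j+l=n+2}\beta_{1,j,l}g^{(j)}h^{(l)}\,dx^n=0$, and the same argument gives $\beta_{1,j,l}=0$. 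The entire argument is bookkeeping; the only conceptual step is recognizing that the $\mathfrak{aff}(1)$-invariance of $B$ is precisely what makes $\delta(B)$ vanish on $\mathfrak{aff}(1)$, so no genuine obstacle is expected.
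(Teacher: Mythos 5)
Your proof is correct and follows essentially the same route as the paper: both arguments reduce to showing that $\delta(B)$ is an $\mathfrak{aff}(1)$-invariant trilinear operator that vanishes on $\mathfrak{aff}(1)$, and then read off the constraints $\beta_{0,j,n+3-j}=\beta_{1,j,n+2-j}=0$ from the structure of such operators (evaluation on $\frac{d}{dx}$ and $x\frac{d}{dx}$). In fact your write-up supplies justifications for two steps the paper merely asserts (invariance of $\delta(B)$ via equivariance of the differential, and its vanishing on $\mathfrak{aff}(1)$ via the Cartan-type computation using the invariance of $B$); the only content of the paper's proof you omit is the explicit binomial-coefficient expression \eqref{coboundary1} for $\delta(B)$ in terms of the $\gamma_{i,j}$, which is not required by the statement itself but is used later in the proof of Theorem \ref{main}.
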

\begin{proof}
From the very definition of coboundaries, we have
$$\delta(B)(X,Y,Z)=B([X,Y],Z)-B([X,Z],Y)+B([Y,Z],X)-L_{X}^{\lambda}B(Y,Z)+L_{Y}^{\lambda}B(X,Z)-L_{Z}^{\lambda}B(X,Y).$$
The coboundary above vanishes on the Lie algebra $\mathfrak{aff}(1)$. Hence, the operator $B$ is $\mathfrak{aff}(1)$-invariant. The conditions $\gamma_{0,n+2}=\gamma_{1,n+1}=0$ come from the fact that the operator $B$ vanish on $\mathfrak{aff}(1)$. Now, the conditions $\beta_{0,j,n+3-j}=\beta_{1,j,n+2-j}=0$ are consequences of $\mathfrak{aff}(1)$-invariance.
More precisely, any corresponding coboundary is up to a scalar factor as follows
$$
\delta(B)(X,Y,Z) = \displaystyle \sum_{\substack {k=1 \\ i\geq4}}^{[\frac{n-3}{3}]-1} \Bigg[\bigg[\binom{i}{k+1}-\binom{i}{k}\bigg]\gamma_{i,n+2-i}  -  \bigg[\binom{n+1-k}{i-k}-\binom{n+1-k}{i-k-1}\bigg]\gamma_{k+1,n+1-k}$$
\begin{equation} \label{coboundary1}
 + \bigg[\binom{n+2+k-i}{k+1}-\binom{n+2+k-i}{k}\bigg]\gamma_{i-k,n+2+k-i} \Bigg]
\begin{vmatrix}
f^{k+1} & g^{k+1} & h^{k+1} \\
f^{i-k} & g^{i-k} & h^{i-k} \\
f^{n+2-i} & g^{n+2-i} & h^{n+2-i}
\end{vmatrix} dx^{n}
\end{equation}
for $X= f\frac{d}{ds}$, $Y =g\frac{d}{ds}$, $Z =h\frac{d}{dx}$, $c_{i,j,n-i-j} \in \mathbb{R}$, $n\in \mathbb{N} +9$ and where $\binom{n}{k} = \frac{n!}{k!(n-k)!}$.
\end{proof}\vskip0.2cm
{\bf Proof of Theorem \ref{main}}
Let $\Omega_{\lambda}$ be a $3$-cocycle on $\mathrm{Vect}(\mathbb{R})$ vanishing on $\mathfrak{aff}(1)$, with values in $\mathbb{F}_{\lambda}$. By Lemma \ref{inva}, up to a scalar factor, $\Omega_{\lambda}$ is a skew-symmetric trilinear differential operator $\mathfrak{aff}(1)$-invariant. Thus, by Proposition \ref{proposition1} we get the explicit formula for $\Omega_{\lambda}$ :
\begin{equation*}
 C_{n}(X,Y,Z) = \sum_{\substack{i = 2 \\ j>i \\ i+j+l = n}}^{[\frac{n-3}{3}]} c_{i,j,n-i-j}
 \begin{vmatrix}
f^{i} & g^{i} & h^{i} \\
f^{j} & g^{j} & h^{j} \\
f^{n-i-j} & g^{n-i-j} & h^{n-i-j}
\end{vmatrix} dx^{n-3}
\end{equation*}
for $X= f\frac{d}{dx}$, $Y =g\frac{d}{dx}$, $Z =h\frac{d}{dx}$; $c_{i,j,n-i-j} \in \mathbb{R}$ and $n\in \mathbb{N}+ 9.$\\
The $3$-cocycle condition is equivalent to the system (where $2\leq \alpha<\beta<\gamma<\theta$)
$$c_{\alpha+\beta-1,\gamma,\theta}\left[\binom{\alpha+\beta-1}{\alpha}-\binom{\alpha+\beta-1}{\alpha-1}\right]+c_{\beta+\gamma-1,\alpha,\theta}\left[\binom{\beta+\gamma-1}{\beta}-\binom{\beta+\gamma-1}{\beta-1}\right]$$
$$-c_{\alpha+\gamma-1,\beta,\theta}\left[\binom{\alpha+\gamma-1}{\alpha}-\binom{\alpha+\gamma-1}{\alpha-1}\right]+c_{\alpha+\theta-1,\beta,\gamma}\left[\binom{\alpha+\theta-1}{\alpha}-\binom{\alpha+\theta-1}{\alpha-1}\right]$$
\begin{equation} \label{condition}
-c_{\beta+\theta-1,\alpha,\gamma}\left[\binom{\beta+\theta-1}{\beta}-\binom{\beta+\theta-1}{\beta-1}\right]+c_{\gamma+\theta-1,\alpha,\beta}\left[\binom{\gamma+\theta-1}{\gamma}-\binom{\gamma+\theta-1}{\gamma-1}\right]=0.
\end{equation}
This system can be deduce by a simple computation. Of course, such a system has at least one solution in which the solution $c_{i,j,n-i-j}$ are juste the coefficients $\beta_{i,j,n-i-j}$ of the coboundaries (\ref{cob1}).
\begin{itemize}
  \item For $n = 9;$ the operator $C_{9}$ satisfies the $3$-cocycle condition. In this cases, the $3$-cocycle has the forme
   $$\Omega_{12}=(X,Y,Z)=\left[c_{2,3,7}\begin{vmatrix}
f^{2} & g^{2} & h^{2} \\
f^{3} & g^{3} & h^{3} \\
f^{7} & g^{7} & h^{7}
\end{vmatrix}+c_{2,4,6}\begin{vmatrix}
f^{2} & g^{2} & h^{2} \\
f^{4} & g^{4} & h^{4} \\
f^{6} & g^{6} & h^{6}
\end{vmatrix} +c_{3,4,5}\begin{vmatrix}
f^{3} & g^{3} & h^{3} \\
f^{4} & g^{4} & h^{4} \\
f^{5} & g^{5} & h^{5}
\end{vmatrix}\right] dx^{9}.$$

On the other hand, the coboundary (\ref{coboundary1}) takes the form
$$ \delta(B)(X,Y,Z) = (2\gamma_{4,7}-48\gamma_{2,9}+20\gamma_{3,8}) \begin{vmatrix}
f^{2} & g^{2} & h^{2} \\
f^{3} & g^{3} & h^{3} \\
f^{7} & g^{7} & h^{7}
\end{vmatrix} + (5\gamma_{5,6}-42\gamma_{2,9}+14\gamma_{4,7}) \begin{vmatrix}
f^{2} & g^{2} & h^{2} \\
f^{4} & g^{4} & h^{4} \\
f^{6} & g^{6} & h^{6}
\end{vmatrix}$$

The terms $c_{2,3,7}$ and $c_{2,4,6}$ can be eliminated by adding a coboundary with an appropriate value of $\gamma_{4,7}$, $\gamma_{2,9}$ $\gamma_{3,8}$ and $\gamma_{5,6}$  so, we get the $3$-cocycle
   $$\Omega_{12}(X,Y,Z)=c_{3,4,5}\begin{vmatrix}
f^{3} & g^{3} & h^{3} \\
f^{4} & g^{4} & h^{4} \\
f^{5} & g^{5} & h^{5}
\end{vmatrix} dx^{9}.$$
  \item For $n = 10;$ the operator $C_{10}$ is not but a coboundary. Indeed, the $3$-cocycle has the form
  $$\Omega_{13}(X,Y,Z)=\Bigg[c_{2,3,8}\begin{vmatrix}
f^{2} & g^{2} & h^{2} \\
f^{3} & g^{3} & h^{3} \\
f^{8} & g^{8} & h^{8}
\end{vmatrix}+c_{2,4,7}\begin{vmatrix}
f^{2} & g^{2} & h^{2} \\
f^{4} & g^{4} & h^{4} \\
f^{7} & g^{7} & h^{7}
\end{vmatrix} +c_{2,5,6}\begin{vmatrix}
f^{2} & g^{2} & h^{2} \\
f^{5} & g^{5} & h^{5} \\
f^{6} & g^{6} & h^{6}
\end{vmatrix}$$
$$+c_{3,4,6}\begin{vmatrix}
f^{3} & g^{3} & h^{3} \\
f^{4} & g^{4} & h^{4} \\
f^{6} & g^{6} & h^{6}
\end{vmatrix}\Bigg] dx^{10}$$
The $3$-cocycle condition applied to the operator $C_{10}$ leads to the condition $9c_{3,4,6}+14c_{2,3,8}-14c_{2,4,7}+5c_{2,5,6}=0.$ Moreover, the terms $c_{2,3,9}$, $c_{2,4,8}$ and $c_{2,5,7}$ can be eliminated by adding a coboundary and we get a coboundary.
  \item For $n = 11;$ the $3$-cocycle $C_{11}$ has the form
   $$\Omega_{14}(X,Y,Z)=\Bigg[c_{2,3,9}\begin{vmatrix}
f^{2} & g^{2} & h^{2} \\
f^{3} & g^{3} & h^{3} \\
f^{9} & g^{9} & h^{9}
\end{vmatrix}+c_{2,4,8}\begin{vmatrix}
f^{2} & g^{2} & h^{2} \\
f^{4} & g^{4} & h^{4} \\
f^{8} & g^{8} & h^{8}
\end{vmatrix} +c_{2,5,7}\begin{vmatrix}
f^{2} & g^{2} & h^{2} \\
f^{5} & g^{5} & h^{5} \\
f^{7} & g^{7} & h^{7}
\end{vmatrix}$$
$$+c_{3,4,7}\begin{vmatrix}
f^{3} & g^{3} & h^{3} \\
f^{4} & g^{4} & h^{4} \\
f^{7} & g^{7} & h^{7}
\end{vmatrix}+c_{3,5,6}\begin{vmatrix}
f^{3} & g^{3} & h^{3} \\
f^{5} & g^{5} & h^{5} \\
f^{6} & g^{6} & h^{6}
\end{vmatrix}\Bigg] dx^{11}$$
The $3$-cocycle condition applied to the operator $C_{11}$ leads to the condition $5c_{3,5,6}+14c_{3,4,7}-28c_{2,4,8}+42c_{2,3,9}=0.$ Moreover, the terms $c_{2,3,9}$, $c_{2,4,8}$ and $c_{2,5,7} $ can be eliminated by adding a coboundary and we get the cocycle
$$\Omega_{14}= \left [ 5 \begin{vmatrix}
f^{3} & g^{3} & h^{3} \\
f^{4} & g^{4} & h^{4} \\
f^{7} & g^{7} & h^{7}
\end{vmatrix}
-14 \begin{vmatrix}
f^{3} & g^{3} & h^{3} \\
f^{5} & g^{5} & h^{5} \\
f^{6} & g^{6} & h^{6}
\end{vmatrix}\right]dx^{11}.$$
\item For $n = 12,~ 13,~ 14;$\\
Let us show that the solutions of system $(\ref{condition})$ are expressed in terms of $c_{3,s,t}$.\\
In this case $\alpha=2$, the term $c_{2,i,n+1-i}$ can be eliminated as in the foregoing cases then we apply the $3$-cocycle condition, one get by collecting the terms in $f''g^{\beta}h^{\gamma}k^{\theta}$ the conditions :
\begin{equation} \label{condition02}
(\beta+1)(\beta-2)c_{\beta+1,\gamma,\theta}+(\gamma+1)(\gamma-2)c_{\beta,\gamma+1,\theta}+(\theta+1)(\theta-2)c_{\beta,\gamma,\theta+1}=0.
\end{equation}
For $\beta=3$, the equation $(\ref{condition02})$ implies that all the constants $c_{4,i,j}$ can be determined uniquely in terms of $c_{3,s,t}$. More precisely,
$$c_{4,\gamma,\theta}=-\frac{1}{4}\Big[(\gamma+1)(\gamma-2)c_{3,\gamma+1,\theta}+(\theta+1)(\theta-2)c_{3,\gamma,\theta+1}\Big].$$
Using the coboundary expression we eliminate the terms $c_{3,\gamma+1,\theta}$ and $c_{3,\gamma,\theta+1}$, so, we get $c_{4,\gamma,\theta}=0$.\\
Now for $\beta=4$ and from the system $(\ref{condition02})$, we have
$$c_{5,\gamma,\theta}=-\frac{1}{10}\Big[(\gamma+1)(\gamma-2)c_{4,\gamma+1,\theta}+(\theta+1)(\theta-2)c_{4,\gamma,\theta+1}\Big].$$
Hence, we get $c_{5,\gamma,\theta}=0$.\\
By continuing this procedure we see that $c_{6,\gamma,\theta}$, $c_{7,\gamma,\theta}$,... can be determined in terms of $c_{3,s,t}$ which can be eliminated by adding a coboundary. It follows that the cohomology is zero.
\item For $n \geq 15$; we have to study $(\ref{condition})$ for $\alpha=3$, the system has one more equation
$$\bigg[\binom{\beta+2}{3}-\binom{\beta+2}{2}\bigg]c_{\beta+2,\gamma,\theta}+\bigg[\binom{\beta+\gamma-1}{\beta}-\binom{\beta+\gamma-1}{\beta-1}\bigg]c_{\beta+\gamma-1,3,\theta}$$
$$-\bigg[\binom{\gamma+2}{3}-\binom{\gamma+2}{2}\bigg]c_{\gamma+2,\beta,\theta}+\bigg[\binom{\theta+2}{3}-\binom{\theta+2}{2}\bigg]c_{\theta+2,\beta,\gamma}$$
$$-\bigg[\binom{\beta+\theta-1}{\beta}-\binom{\beta+\theta-1}{\beta}\bigg]c_{\beta+\theta-1,3,\gamma}+\bigg[\binom{\gamma+\theta-1}{\gamma}-\binom{\gamma+\theta-1}{\gamma-1}\bigg]c_{\gamma+\theta-1,3,\beta}=0.$$
For $\beta=4$, and from this system we have
$$c_{6,\gamma,\theta}=-\frac{1}{5}\Bigg[\bigg[\binom{\gamma+3}{4}-\binom{\gamma+3}{3}\bigg]c_{3,\gamma+3,\theta}+\bigg[\binom{\gamma+2}{3}-\binom{\gamma+2}{2}\bigg]c_{4,\gamma+2,\theta}$$
$$+\bigg[\binom{\theta+2}{3}-\binom{\theta+2}{2}\bigg]c_{4,\gamma,\theta+2}-\bigg[\binom{\theta+3}{4}-\binom{\theta+3}{2}\bigg]c_{3,\gamma,\theta+3}$$
$$+\bigg[\binom{\gamma+\theta-1}{\gamma}-\binom{\gamma+\theta-1}{\gamma-1}\bigg]c_{3,4,\gamma+\theta-1}\Bigg].$$
By the coboundary expression we eliminate the terms $c_{3,s,t}$ and $c_{4,i,j}$. So, we get $c_{6,\gamma,\theta}=0$.\\
By continuing this procedure we see that $c_{7,\gamma,\theta}$, $c_{8,\gamma,\theta}$,... can be determined in terms $c_{3,s,t}$ and $c_{4,i,j}$ which can be eliminated by adding a coboundary.
 Therefore $$\mathrm{H}^3_{\mathrm{diff}}\big({\rm Vect}(\mathbb{R}),\mathfrak{aff}(1) ; \mathbb{F}_{\lambda}\big)=0$$
\end{itemize}

%%%%%%%%%%%%%%%%%%%%%%%%%%%%%%%%%%%%%%%%%%%%%%%%%%%%%%%%%%%%%%%%%%%%%%%%%%%%
\section{Explicit $3$-cocycles of $\mathrm{Vect}(\mathbb{R})$}
%%%%%%%%%%%%%%%%%%%%%%%%%%%%%%%%%%%%%%%%%%%%%%%%%%%%%%%%%%%%%%%%%%%%%%%%%%%%
The main result of this section is the following
\begin{thm} \label{main2}
We have
\begin{equation*}
\mathrm{H}^3_{\mathrm{diff}}\left({\rm Vect}(\mathbb{R}) ; \mathbb{F}_{\lambda}\right) = \left \{
\begin{array}{lll}
\mathbb{R} & {\rm if } ~\lambda=5,6,9,11; \\[3mm]
\mathbb{R}^{2} & {\rm if } ~\lambda=7,8; \\[3mm]
0& {\rm otherwise. }
\end{array}
\right.
\end{equation*}
\end{thm}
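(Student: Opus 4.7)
The plan is to leverage Theorem \ref{main} by enlarging both the classes of cocycles and of coboundaries to those that may not vanish on $\mathfrak{aff}(1)$. I would first reduce to $\mathfrak{aff}(1)$-invariant cochains: the element $x\tfrac{d}{dx}\in\mathfrak{aff}(1)$ acts semisimply with integer weights on each finite-dimensional space of differential cochains of fixed order, and since its action on cohomology is trivial (the Lie derivative is chain-homotopic to zero), every cohomology class is represented by its weight-zero component, which is automatically $\mathfrak{aff}(1)$-invariant. I would then repeat the argument of Proposition \ref{proposition1} without the extra vanishing hypothesis: invariance under $\tfrac{d}{dx}$ forces constant coefficients, invariance under $x\tfrac{d}{dx}$ forces $i+j+l=\lambda+3$, and skew-symmetry assembles the terms into the determinants
$$C(X,Y,Z)=\sum_{\substack{0\le i<j<l\\ i+j+l=\lambda+3}}c_{i,j,l}\begin{vmatrix}f^i & g^i & h^i\\ f^j & g^j & h^j\\ f^l & g^l & h^l\end{vmatrix}dx^{\lambda}.$$
The coboundary picture of Proposition \ref{cob} is extended in the same way: the invariant 2-cochains $B(X,Y)=\sum \gamma_{i,j}f^{i}g^{j}dx^{\lambda}$ are now allowed to have $\gamma_{0,\lambda+2}\neq 0$ and $\gamma_{1,\lambda+1}\neq 0$, producing a larger image $\delta(B)$ that contains nonzero $\beta_{0,j,l}$ and $\beta_{1,j,l}$ terms.

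With these ingredients the cocycle condition reduces to a linear system of the same shape as \eqref{condition}, but over the enlarged index set including $i\in\{0,1\}$. For the new values $\lambda\in\{5,6,7,8\}$, I would enumerate the (few) admissible triples $(i,j,l)$, solve the resulting system, and quotient by the enlarged coboundary space; direct computation should produce one surviving cohomology class for each of $\lambda=5,6$ and two for each of $\lambda=7,8$. For $\lambda\in\{9,11\}$ the additional low-order triples should contribute only coboundaries, so the answer agrees with Theorem \ref{main}. For the remaining $\lambda$, I would adapt the inductive elimination used at the end of the proof of Theorem \ref{main}: the new instances of the relation \eqref{condition02} with $\alpha\in\{0,1\}$ successively express the low-index coefficients in terms of higher ones, which are then eliminated by the enlarged space of coboundaries, yielding trivial cohomology.

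The main obstacle will be combinatorial bookkeeping: dropping the vanishing hypothesis multiplies the number of independent triples $(i,j,l)$ and adds new instances of the cocycle relation linking them, while the coboundary space also grows accordingly. The delicate step is organizing the elimination so that the low-index coefficients $c_{0,j,l}$ and $c_{1,j,l}$ can be removed by the enlarged coboundaries for \emph{most} values of $\lambda$, while being genuinely obstructed precisely in the four sporadic cases $\lambda\in\{5,6,7,8\}$ where the new classes appear; tracking the exact dimension of the obstruction space in each case will require a careful case analysis, and in particular an explicit description of the analog of \eqref{coboundary1} when the generating 2-cochain is allowed to act nontrivially on $\mathfrak{aff}(1)$.
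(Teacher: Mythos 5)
Your proposal is essentially the paper's own proof of Theorem \ref{main2}: the paper likewise starts from the general constant-coefficient, homogeneous (hence $\mathfrak{aff}(1)$-invariant) $3$-cocycle including the $i=0,1$ terms, enlarges the coboundary space to $2$-cochains $B$ with $\beta_{0,n+2},\beta_{1,n+1}\neq 0$, uses these together with the cocycle relations to kill the $c_{0,j,l}$ (and, for $n>8$, the $c_{1,j,l}$) coefficients, treats $n=3,\dots,8$ by direct case-by-case computation yielding the dimensions $0,0,1,1,2,2$, and for $n>8$ reduces to a cocycle vanishing on $\mathfrak{aff}(1)$ so that Theorem \ref{main} applies. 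The only caveat is that your justification of the initial reduction is imprecise --- the weight-zero component with respect to $x\frac{d}{dx}$ is $x\frac{d}{dx}$-invariant but not automatically $\frac{d}{dx}$-invariant (its coefficients are monomials in $x$, not constants), so invariance under the translation generator requires a separate argument --- but since the paper assumes exactly this reduction with no justification at all, this does not distinguish your route from the paper's.
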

\begin{proof}
Consider a general form of $3$-cocycle \big(where $X= f\frac{d}{ds}$, $Y =g\frac{d}{ds}$ and $Z =h\frac{d}{dx} \in \rm Vect(\mathbb{R})$\big)
$$C(X,Y,Z) = \sum_{\substack{ i+j+l = n+3}} c_{i,j,n+3-i-j} f^{i} g^{j} h^{n+3-i-j} dx^{\lambda},$$
where $c_{i,j,l}=c_{j,l,i}=c_{l,i,j}=-c_{j,i,l}=-c_{i,l,j}=-c_{l,j,i}$.\\
We will eliminate coboundaries in ordre to turn the 3-cocycle above into 3-cocycle vanishing on  $\mathfrak{aff}(1)$. Let $B : \mathrm{Vect}(\mathbb{R}) \wedge \mathrm{Vect}(\mathbb{R}) \rightarrow \mathbb{F}_{n}$ be an operator defined by \big(for $X=f\frac{d}{dx}\in \mathrm{Vect}(\mathbb{R})$ and $Y=g\frac{d}{dx}\in \mathrm{Vect}(\mathbb{R})$\big) :
$$B(X,Y) = \sum_{i+j=n+2}\beta_{i,j} f^{i}g^{j}dx^{n}$$ where $\beta_{i,j}=-\beta_{j,i}.$\\ Consider a general expression of a coboundary
$$\delta B(X,Y,Z)= -\Big[(n+1)\beta_{0,n+2}+\beta_{1,n+1}\Big] \begin{vmatrix}
f & g & h \\
f' & g' & h' \\
f^{n+2} & g^{n+2} & h^{n+2}
\end{vmatrix} dx^{n}$$
$$-\sum_{i \geq 2 } \left[\binom{n+2}{i}-\binom{n+2}{i-1}\right] \beta_{0,n+2}
 \begin{vmatrix}
f & g & h \\
f^{i} & g^{i} & h^{i} \\
f^{n+3-i} & g^{n+3-i} & h^{n+3-i}
\end{vmatrix} dx^{n}$$
$$-\sum_{i \geq 2} \left[\binom{n+1}{i}-\binom{n+1}{i-1}\right] \beta_{1,n+1}
 \begin{vmatrix}
f' & g' & h' \\
f^{i} & g^{i} & h^{i} \\
f^{n+2-i} & g^{n+2-i} & h^{n+2-i}
\end{vmatrix} dx^{n} + \hbox{higher order terms.}$$
Immediately we see that the constant $c_{0,1,n+2}$ can be eliminated upon putting $c_{0,1,n+2}=-\big[(n+1)\beta_{0,n+2}+\beta_{1,n+1}\big]$. On the other hand, the $3$-cocycle condition implies that $c_{0,i,n+3-i}=-\frac{1}{n}\left[\binom{n+2}{i}-\binom{n+2}{i-1}\right]c_{0,1,n+2}$.\\

$\bullet$ For $n=3,$ the $3$-cocycle takes the form
$$\Omega_{6}(X, Y, Z) = c_{1,2,3} \begin{vmatrix}
f^{1} & g^{1} & h^{1} \\
f^{2} & g^{2} & h^{2} \\
f^{3} & g^{3} & h^{3}
\end{vmatrix} dx^{3}.$$
On the other hand, the coboundary takes the form
$$\delta B(X,Y,Z)= 2 \beta_{1,4}\begin{vmatrix}
f^{1} & g^{1} & h^{1} \\
f^{2} & g^{2} & h^{2} \\
f^{3} & g^{3} & h^{3}
\end{vmatrix} dx^{3}$$
where $\beta_{1,4}$ is a constant. So, the terms $c_{1,2,3}$ can be eliminated by adding the coboundary.

$\bullet$ For $n=4,$ the $3$-cocycle takes the form
$$\Omega_{7}(X, Y, Z) = c_{1,2,4} \begin{vmatrix}
f^{1} & g^{1} & h^{1} \\
f^{2} & g^{2} & h^{2} \\
f^{4} & g^{4} & h^{4}
\end{vmatrix} dx^{3}.$$
On the other hand, the coboundary takes the form
$$\delta B(X,Y,Z)= 5 \beta_{1,5}\begin{vmatrix}
f^{1} & g^{1} & h^{1} \\
f^{2} & g^{2} & h^{2} \\
f^{4} & g^{4} & h^{4}
\end{vmatrix} dx^{4}$$
where $\beta_{1,5}$ is a constant. So, the terms $c_{1,2,4}$ can be eliminated by adding the coboundary.

$\bullet$ For $n=5,$ the $3$-cocycle takes the form
$$\Omega_{8}(X, Y, Z) = \left[c_{1,2,5} \begin{vmatrix}
f^{1} & g^{1} & h^{1} \\
f^{2} & g^{2} & h^{2} \\
f^{5} & g^{5} & h^{5}
\end{vmatrix} + c_{1,3,4} \begin{vmatrix}
f^{1} & g^{1} & h^{1} \\
f^{3} & g^{3} & h^{3} \\
f^{4} & g^{4} & h^{4}
\end{vmatrix}\right]dx^{5}.$$
On the other hand, the coboundary takes the form
$$\delta B(X,Y,Z)= \left[9 \beta_{1,6}\begin{vmatrix}
f^{1} & g^{1} & h^{1} \\
f^{2} & g^{2} & h^{2} \\
f^{5} & g^{5} & h^{5}
\end{vmatrix} +5 \beta_{1,6}\begin{vmatrix}
f^{1} & g^{1} & h^{1} \\
f^{3} & g^{3} & h^{3} \\
f^{4} & g^{4} & h^{4}
\end{vmatrix} \right]dx^{5}$$
where $\beta_{1,6}$ is a constant. So, the terms $c_{1,2,5}$ or $c_{1,3,4}$ can be eliminated by adding a coboundary. Hence, the cohomology group is one-dimensional.

$\bullet$ For $n=6,$ the $3$-cocycle takes the form
$$\Omega_{9}(X, Y, Z) =\left[ c_{1,2,6} \begin{vmatrix}
f^{1} & g^{1} & h^{1} \\
f^{2} & g^{2} & h^{2} \\
f^{6} & g^{6} & h^{6}
\end{vmatrix} + c_{1,3,5} \begin{vmatrix}
f^{1} & g^{1} & h^{1} \\
f^{3} & g^{3} & h^{3} \\
f^{5} & g^{5} & h^{5}
\end{vmatrix}+ c_{2,3,4} \begin{vmatrix}
f^{2} & g^{2} & h^{2} \\
f^{3} & g^{3} & h^{3} \\
f^{4} & g^{4} & h^{4}
\end{vmatrix}\right]dx^{6}.$$
The $3$-cocycle condition gives $c_{2,3,4}+c_{1,2,6}-c_{1,3,5}=0$ and the terms $c_{1,2,6}$ or $c_{1,3,5}$ can be eliminated by adding a coboundary. Hence, the cohomology group is one-dimensional.

$\bullet$ For $n=7,$ the $3$-cocycle takes the form
$$\Omega_{10}(X, Y, Z) = \Bigg[c_{1,2,7} \begin{vmatrix}
f^{1} & g^{1} & h^{1} \\
f^{2} & g^{2} & h^{2} \\
f^{7} & g^{7} & h^{7}
\end{vmatrix} + c_{1,3,6} \begin{vmatrix}
f^{1} & g^{1} & h^{1} \\
f^{3} & g^{3} & h^{3} \\
f^{6} & g^{6} & h^{6}
\end{vmatrix}+c_{1,4,5} \begin{vmatrix}
f^{1} & g^{1} & h^{1} \\
f^{4} & g^{4} & h^{4} \\
f^{5} & g^{5} & h^{5}
\end{vmatrix}$$
 $$+c_{2,3,5} \begin{vmatrix}
f^{2} & g^{2} & h^{2} \\
f^{3} & g^{3} & h^{3} \\
f^{5} & g^{5} & h^{5}
\end{vmatrix}\Bigg]dx^{7}.$$
The $3$-cocycle condition gives $7c_{2,3,5}-2c_{1,4,5}+14c_{1,2,7,}-9c_{1,3,6}=0$ and one of the terms $c_{1,2,7}$, $c_{1,3,6}$ or $c_{1,4,5}$ can be eliminated by adding the coboundary, Hence, the cohomology group is two-dimensional.

$\bullet$ For $n=8,$ the $3$-cocycle takes the form
$$\Omega_{11}(X, Y, Z) = \Bigg[c_{1,2,8} \begin{vmatrix}
f^{1} & g^{1} & h^{1} \\
f^{2} & g^{2} & h^{2} \\
f^{8} & g^{8} & h^{8}
\end{vmatrix} + c_{1,3,7} \begin{vmatrix}
f^{1} & g^{1} & h^{1} \\
f^{3} & g^{3} & h^{3} \\
f^{7} & g^{7} & h^{7}
\end{vmatrix}+c_{1,4,6} \begin{vmatrix}
f^{1} & g^{1} & h^{1} \\
f^{4} & g^{4} & h^{4} \\
f^{6} & g^{6} & h^{6}
\end{vmatrix}$$
 $$+c_{2,3,6} \begin{vmatrix}
f^{2} & g^{2} & h^{2} \\
f^{3} & g^{3} & h^{3} \\
f^{6} & g^{6} & h^{6}
\end{vmatrix} +c_{2,4,5} \begin{vmatrix}
f^{2} & g^{2} & h^{2} \\
f^{4} & g^{4} & h^{4} \\
f^{5} & g^{5} & h^{5}
\end{vmatrix} \Bigg]dx^{8}.$$
The $3$-cocycle condition gives $9c_{2,3,6}+28c_{1,2,8}-14c_{1,3,7,}-2c_{1,4,6}=0$, $8c_{2,4,5}+14c_{1,2,8}-9c_{1,4,6}=0$ and one of the terms $c_{1,2,8}$, $c_{1,3,7}$ or $c_{1,4,6}$ can be eliminated by adding the coboundary, Hence, the cohomology group is two-dimensional..\\
$\bullet$ Suppose now that $n>8$. We will deal with the coefficients $c_{1,\gamma,n+2-\gamma}$. The $3$-cocycle condition implies that the component of $f'g^{\beta}h^{\gamma}k^{n+3-\beta-\gamma}$, which should be zero, is equal to
$$c_{\beta+\gamma-1,1,n+3-\beta-\gamma}\left[\binom{\beta+\gamma-1}{\beta}-\binom{\beta+\gamma-1}{\beta-1}\right]-c_{n+2-\gamma,1,\gamma}\left[\binom{n+2-\gamma}{\beta}-\binom{n+2-\gamma}{\beta-1}\right]$$
\begin{equation} \label{equation33}
 +c_{n+2-\beta,1,\beta}\left[\binom{n+2-\beta}{\gamma}-\binom{n+2-\beta}{\gamma-1}\right].\end{equation}
The coefficient of $f'g''h^{n}$ is zero in the expression of the coboundary upon putting $c_{1,2,n}=-\frac{1}{2}(n+1)(n-2)\beta_{1,n+1}$. But $c_{1,3,n-1}$ can be eliminated upon putting $c_{1,3,n-1}=-\frac{1}{6}n(n+1)(n-4)\beta_{1,n+1}$. By putting $\beta=2$, we can see from (\ref{equation33}) that all $c_{1,t,n+2-t}$ can be expressed in terms of $c_{1,2,n}$. They are given by the induction formula :
\begin{equation*} c_{1,s,n+2-s}=\frac{2}{s(s-3)}\Bigg[-c_{1,s-1,n+3-s}\left[\binom{n+3-s}{2}-\binom{n+3-s}{1}\right]+c_{1,2,n}\left[\binom{n}{s-1}-\binom{n}{s-2}\right]\Bigg] \end{equation*}
for $s>3.$\\
However, for $\beta=3$ and $\gamma=4$, the system (\ref{equation33}) become :
$$\binom{n}{4}(84+n(n-4)(n-7))c_{1,2,n}=0.$$
Since $n>4$, the equation above admits a solution only for $c_{1,2,n}=0$. Thus, all $c_{1,\gamma,n+2-\gamma}$ are zero.
Finally, the remaining 3-cocycle vanishes on $\mathfrak{aff}(1)$.
\end{proof}

\begin{lemma}
Every coboundary $\delta (B) \in \mathrm{B}^{3}\left(\mathrm{Vect}(\mathbb{R}) ; \mathbb{F}_{n}\right)$ vanishing on $\mathfrak{aff}(1)$ retains the following form
$$\delta(B)(X,Y,Z)=\sum_{i+j+l=n+3}\beta_{i,j,l}f^{i}g^{j}h^{l}dx^{n}$$ where $$\beta_{0,j,n+3-j}=\beta_{1,j,n+2-j}=0.$$
So any corresponding coboundary is up to a scalar factor as follows
$$\delta(B)(X,Y,Z) = \displaystyle \sum_{\substack {k=1 \\ i\geq4}}^{[\frac{n-6}{3}]} \Bigg[\bigg[\binom{i}{k+1}-\binom{i}{k}\bigg]\gamma_{i,n+2-i}  -  \bigg[\binom{n+1-k}{i-k}-\binom{n+1-k}{i-k-1}\bigg]\gamma_{k+1,n+1-k}$$
\begin{equation*} \label{coboundary}
 + \bigg[\binom{n+2+k-i}{k+1}-\binom{n+2+k-i}{k}\bigg]\gamma_{i-k,n+2+k-i} \Bigg]
\begin{vmatrix}
f^{k+1} & g^{k+1} & h^{k+1} \\
f^{i-k} & g^{i-k} & h^{i-k} \\
f^{n+2-i} & g^{n+2-i} & h^{n+2-i}
\end{vmatrix} dx^{n-3}.
\end{equation*}
\end{lemma}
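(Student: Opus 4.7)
The plan is to parallel the proof of Proposition~\ref{cob}, the only new feature being that here $B$ is an arbitrary 2-cochain, not required a priori to be $\mathfrak{aff}(1)$-invariant or to vanish on $\mathfrak{aff}(1)$. The goal is to show that the hypothesis ``$\delta(B)$ vanishes on $\mathfrak{aff}(1)$'' is already enough to force $B$, possibly after adjusting by a 2-cocycle (which leaves $\delta(B)$ unchanged), into the exact shape used in Proposition~\ref{cob}, after which the explicit expansion is identical.

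First I would write a generic 2-cochain
$$B(X,Y) = \sum_{i,j} \gamma_{i,j}(x)\, f^i g^j\, dx^n, \qquad \gamma_{i,j} = -\gamma_{j,i},$$
and expand $\delta(B)(X,Y,Z)$ using the coboundary formula recalled in Section~2. Specializing the hypothesis to $X = d/dx$ kills every term except those proportional to $\gamma'_{i,j}(x)$, forcing the $\gamma_{i,j}$ to be constants; specializing to $X = x\, d/dx$ then selects those constants with $i+j = n+2$, reducing $B$ to a finite sum $\sum_{i+j=n+2} \gamma_{i,j} f^i g^j dx^n$. The remaining contributions of $\gamma_{0,n+2}$ and $\gamma_{1,n+1}$ to $\delta(B)$ appear precisely in the coefficients $\beta_{0,j,n+3-j}$ and $\beta_{1,j,n+2-j}$; since $\delta(B)$ vanishes on $\mathfrak{aff}(1)$, these $\beta$'s must be zero, which is the first claim of the lemma and amounts to taking $B$ itself to vanish on $\mathfrak{aff}(1)$ without loss of generality.

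With $B$ reduced to the form treated by Proposition~\ref{cob}, one computes $\delta(B)(X,Y,Z)$ by expanding the three bracket terms using $[f\partial_x, g\partial_x] = (fg' - f'g)\partial_x$ and the three Lie-derivative terms via the Leibniz rule applied to products of the form $(f\, g^{(m)})^{(p)}$. Each monomial $f^{(a)}g^{(b)}h^{(c)}$ with $a+b+c = n+3$ inherits binomial weights $\binom{i}{k}$ from these expansions. Collecting the products into skew-symmetric $3\times 3$ Wronskians indexed by triples $(k+1,\, i-k,\, n+2-i)$ with $1 \leq k$, $i \geq 4$, and $k+1 < i-k < n+2-i$ produces exactly the three binomial-difference coefficients displayed in the statement, and the strict inequalities on the row indices together with $\gamma_{0,\ast} = \gamma_{1,\ast} = 0$ collapse the summation range to $k \leq [\frac{n-6}{3}]$.

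The main obstacle is the combinatorial bookkeeping of the final step: one must track how each $\gamma_{i, n+2-i}$ contributes simultaneously to three distinct Wronskians through the bracket and Lie-derivative terms, and verify that the three coefficients attached to $\gamma_{i,n+2-i}$, $\gamma_{k+1, n+1-k}$, and $\gamma_{i-k, n+2+k-i}$ assemble into precisely the claimed binomial differences. This verification is essentially identical to the one carried out in the proof of Proposition~\ref{cob}, to which I would appeal for the detailed calculation.
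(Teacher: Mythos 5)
There is a genuine gap in the central step of your reduction. The hypothesis of the lemma constrains $\delta(B)$, not $B$, and therefore constrains $B$ only modulo $2$-cocycles; yet your two specializations draw conclusions about $B$ itself. Concretely, setting $X=\frac{d}{dx}$ in the coboundary formula and cancelling the bracket terms $B([X,Y],Z)-B([X,Z],Y)$ against the derivative-shift part of $\mathbb{L}^{n}_{X}B(Y,Z)$ leaves
\begin{equation*}
\delta(B)\Big(\tfrac{d}{dx},Y,Z\Big)=-\sum_{i,j}\gamma'_{i,j}\,g^{(i)}h^{(j)}\,dx^{n}
+B\Big([Y,Z],\tfrac{d}{dx}\Big)+\mathbb{L}^{n}_{Y}B\Big(\tfrac{d}{dx},Z\Big)-\mathbb{L}^{n}_{Z}B\Big(\tfrac{d}{dx},Y\Big),
\end{equation*}
and the last three summands involve the coefficients $\gamma_{0,j}=-\gamma_{j,0}$ \emph{undifferentiated}. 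So the vanishing of the left-hand side is a relation mixing the $\gamma'_{i,j}$ with the $\gamma_{0,j}$, not the statement $\gamma'_{i,j}=0$: it does not ``kill every term except those proportional to $\gamma'_{i,j}$.'' In fact the conclusion you want is false at this level of generality: for any differential $1$-cochain $A\big(f\tfrac{d}{dx}\big)=a(x)f^{(p)}dx^{n}$ with $a$ non-constant, the $2$-cochain $B=\delta(A)$ satisfies $\delta(B)=0$, which certainly vanishes on $\mathfrak{aff}(1)$, while $B$ has non-constant coefficients, is not concentrated in degree $i+j=n+2$, and has nonzero $\gamma_{0,j}$ and $\gamma_{1,j}$ entries. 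Hence no direct specialization of the hypothesis can force the normal form; it can only hold after subtracting a suitable $2$-cocycle, and constructing that adjustment is precisely the content of the lemma --- it amounts to showing that every absolute coboundary vanishing on $\mathfrak{aff}(1)$ is a \emph{relative} coboundary, i.e., that the natural map $\mathrm{H}^3\big(\mathrm{Vect}(\mathbb{R}),\mathfrak{aff}(1);\mathbb{F}_{n}\big)\to\mathrm{H}^3\big(\mathrm{Vect}(\mathbb{R});\mathbb{F}_{n}\big)$ is injective. Your proposal flags this adjustment (``possibly after adjusting by a $2$-cocycle'') but never performs it, and everything after that point depends on it.

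Note also that the first assertion of the lemma requires none of this machinery: writing $\delta(B)(X,Y,Z)=\sum\beta_{i,j,l}f^{(i)}g^{(j)}h^{(l)}dx^{n}$ and evaluating at $f=1$, then at $f=x$, kills all $\beta_{0,j,l}$ and then all $\beta_{1,j,l}$ outright; alternatively, since $\delta(B)$ is itself a $3$-cocycle vanishing on $\mathfrak{aff}(1)$, Lemma \ref{inva} and Proposition \ref{proposition1} show at once that it is a combination of Wronskians all of whose rows have order $\geq 2$. That is the route the paper intends (its proof is only a pointer back to Theorem \ref{main}, whose ingredients are Lemma \ref{inva}, Proposition \ref{proposition1} and Proposition \ref{cob}). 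What genuinely requires proof is the second assertion --- that $\delta(B)$ is, up to scale, the displayed expression generated by a reduced cochain with $\gamma_{0,n+2}=\gamma_{1,n+1}=0$ --- and that is exactly the step your argument leaves open.
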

\begin{proof}
Similar to Theorem \ref{main}.
\end{proof}

%%%%%%%%%%%%%%%%%%%%%%%%%%%%%%%%%%%%%%%%%%%%%%%%%%%%%%%%%%%%%%%%%%%%%%%%%%%%

\end{document}